\documentclass[a4paper, 11pt]{amsart}   
\usepackage{mathptmx, amssymb,amscd,latexsym, eulervm}   
\usepackage{amsmath}
\usepackage{amsthm}
\usepackage{colonequals}
\usepackage{mathdots}
\usepackage{color}
\definecolor{chianti}{rgb}{0.6,0,0}
\definecolor{meretale}{rgb}{0,0,.6}
\definecolor{leaf}{rgb}{0,.35,0}
\usepackage[colorlinks=true, pagebackref, hyperindex, citecolor=meretale, urlcolor=leaf, linkcolor=chianti]{hyperref}
\usepackage{color}
\usepackage[onehalfspacing]{setspace}
\usepackage{tabularx}
\usepackage{amsfonts}
\usepackage{paralist}
\usepackage{aliascnt}
\usepackage[initials, lite]{amsrefs}
\usepackage{amscd}
\usepackage{blkarray}
\usepackage{mathbbol}
\usepackage{setspace}
\usepackage[inner=2.5cm,outer=2.5cm, bottom=3.2cm]{geometry}
\usepackage{tikz, tikz-cd}
\usepackage{calligra,mathrsfs}

\usepackage{tikz}
\usetikzlibrary{matrix}
\usetikzlibrary{arrows,calc}
\allowdisplaybreaks


\BibSpec{collection.article}{%
	+{}  {\PrintAuthors}                {author}
	+{,} { \textit}                     {title}
	+{.} { }                            {part}
	+{:} { \textit}                     {subtitle}
	+{,} { \PrintContributions}         {contribution}
	+{,} { \PrintConference}            {conference}
	+{}  {\PrintBook}                   {book}
	+{,} { }                            {booktitle}
	+{,} { }                            {series}
	+{, vol.} { }                            {volume}
	+{,} { }                            {publisher}
	+{,} { \PrintDateB}                 {date}
	+{,} { pp.~}                        {pages}
	+{,} { }                            {status}
	+{,} { \PrintDOI}                   {doi}
	+{,} { available at \eprint}        {eprint}
	+{}  { \parenthesize}               {language}
	+{}  { \PrintTranslation}           {translation}
	+{;} { \PrintReprint}               {reprint}
	+{.} { }                            {note}
	+{.} {}                             {transition}
	+{}  {\SentenceSpace \PrintReviews} {review}
}
\AtBeginDocument{%
	\def\MR#1{}
}

\makeatletter
\@namedef{subjclassname@2020}{%
	\textup{2020} Mathematics Subject Classification}
\makeatother


\newcommand{\kk}{\mathbb{k}}

\newcommand{\pp}{\mathfrak{p}}
\newcommand{\aaa}{\mathfrak{a}}
\newcommand{\ccc}{\mathfrak{c}}
\newcommand{\bb}{\mathfrak{b}}

\newcommand{\Quot}{{\normalfont\text{Quot}}}

\newcommand{\Diff}{{\rm Diff}}
\newcommand{\IM}{\normalfont\text{Im}}

\newcommand{\Ass}{\normalfont\text{Ass}}

\newcommand{\Hom}{{\normalfont\text{Hom}}}

\newcommand{\Rred}{{R_{\rm red}}}

\newcommand{\Spec}{{\normalfont\text{Spec}}}


\newtheorem{theorem}{Theorem}[section]

\newtheorem{headthm}{Theorem}

\newaliascnt{headcor}{headthm}
\newtheorem{headcor}[headcor]{Corollary}
\aliascntresetthe{headcor}

\newaliascnt{headconj}{headthm}

\aliascntresetthe{headconj}

\newaliascnt{corollary}{theorem}
\newtheorem{corollary}[corollary]{Corollary}
\aliascntresetthe{corollary}

\newaliascnt{claim}{theorem}

\aliascntresetthe{claim}

\newaliascnt{lemma}{theorem}
\newtheorem{lemma}[lemma]{Lemma}
\aliascntresetthe{lemma}

\newaliascnt{conjecture}{theorem}

\aliascntresetthe{conjecture}

\newaliascnt{proposition}{theorem}
\newtheorem{proposition}[proposition]{Proposition}
\aliascntresetthe{proposition}

\theoremstyle{definition}
\newaliascnt{definition}{theorem}
\newtheorem{definition}[definition]{Definition}
\aliascntresetthe{definition}

\newaliascnt{notation}{theorem}

\aliascntresetthe{notation}

\newaliascnt{example}{theorem}

\aliascntresetthe{example}

\newaliascnt{examples}{theorem}

\aliascntresetthe{examples}

\newaliascnt{remark}{theorem}
\newtheorem{remark}[remark]{Remark}
\aliascntresetthe{remark}

\newaliascnt{question}{theorem}

\aliascntresetthe{question}

\newaliascnt{questions}{theorem}

\aliascntresetthe{questions}

\newaliascnt{problem}{theorem}

\aliascntresetthe{problem}

\newaliascnt{construction}{theorem}

\aliascntresetthe{construction}

\newaliascnt{setup}{theorem}
\newtheorem{setup}[setup]{Setup}
\aliascntresetthe{setup}

\newaliascnt{algorithm}{theorem}

\aliascntresetthe{algorithm}

\newaliascnt{observation}{theorem}

\aliascntresetthe{observation}

\newaliascnt{defprop}{theorem}

\aliascntresetthe{defprop}

\DeclareFontFamily{OT1}{pzc}{}
\DeclareFontShape{OT1}{pzc}{m}{it}{<-> s * [1.100] pzcmi7t}{}
\DeclareMathAlphabet{\mathchanc}{OT1}{pzc}{m}{it}

\def\equationautorefname~#1\null{(#1)\null}
\def\sectionautorefname~#1\null{Section #1\null}
\def\subsectionautorefname~#1\null{\S #1\null}



\begin{document}
	
	\title{Uniformity in nonreduced rings via Noetherian operators}

	\author{Yairon Cid-Ruiz}
	\address{Department of Mathematics, North Carolina State University, Raleigh, NC 27695, USA}
	\email{ycidrui@ncsu.edu}
	
	\author{Jack Jeffries}
	\address{Department of Mathematics, University of Nebraska-Lincoln, 203 Avery Hall, Lincoln, NE 68588, USA}
	\email{jack.jeffries@unl.edu}
	

\date{\today}
\keywords{uniformity, Artin-Rees lemma, differential operators, Noetherian operators, nonreduced rings}
\subjclass[2020]{13N05, 13N99, 13A15}

	\maketitle
	
\begin{abstract}
	We prove a differential version of the Artin-Rees lemma with the use of Noetherian differential operators.
	As a consequence, we obtain several uniformity results for nonreduced rings.
\end{abstract}

\section{Introduction}

A curious historical tradition is that analytic and algebraic techniques have often competed to prove equivalent results. 
This paper lies at the crossroads of the following two topics where both algebraic and analytic techniques have played essential roles: 

\begin{enumerate}[(a)]
	\item\label{topic_a} \emph{Primary ideals and Noetherian operators:}  
	It is known that one can use Macaulay's inverse systems to describe zero-dimensional primary ideals in terms of differential operators (see \cite{GROBNER_LIEGE, GROBNER_MATH_ANN, MACAULAY}). 
	However, analysts first described arbitrary primary ideals via differential operators in polynomial rings over the complex numbers.
	Indeed, a main step in the Fundamental Principle of Ehrenpreis and Palamodov \cite{EHRENPREIS, PALAMODOV} is the characterization of primary ideals by differential operators.
	Following Palamodov's notation, a set of differential operators describing a primary ideal is called a set of \emph{Noetherian (differential) operators}.
	Subsequent important algebraic developments were made by Brumfiel \cite{BRUMFIEL_DIFF_PRIM} and Oberst \cite{OBERST_NOETH_OPS}.
	More recently, this theme has seen a rebirth in the areas of commutative algebra and computational algebra (see \cite{NOETH_OPS, PRIM_DIFF_DEC, PRIM_DIFF_EQ, M2_NOETH_OPS,LINEAR_PDE_STURMFELS, CHEN_LEYKIN}).
	
	\smallskip
	\item \label{topic_b} \emph{Integral closure and the Brian\c{c}on-Skoda theorem:} The Brian\c{c}on-Skoda theorem \cite{BRIANCON_SKODA}, proved initially in the complex analytic case, states that given an ideal $I$, there is a constant $c$ such that $\overline{I^{n+c}} \subset I^n$ for any $n \ge 0$, where $\overline{I^{n+c}}$ denotes the integral closure of $I^{n+c}$.
	The first algebraic proof of this result was given by Lipman and Sathaye for regular rings \cite{LIPMAN_SATHAYE} (also, see \cite{HH_TIGHT_CLOSURE,huneke2006integral}).
	Then, Huneke went even further in his uniform bounds paper \cite{HUNEKE_UNIFORM}, where he showed that for large classes of reduced rings one could choose a uniform constant $c$ that is valid for all ideals $I$. The reducedness hypothesis is necessary since the nilradical is contained in the integral closure of any ideal.
\end{enumerate}

\noindent
Our point of departure is a paper of Sznajdman \cite{Sznajdman} that combines \hyperref[topic_a]{(a)} and \hyperref[topic_b]{(b)} in a spectacular way: in a complex analytic setting, it is shown that one can use Noetherian operators to provide a version of the Brian\c{c}on-Skoda theorm for nonreduced analytic spaces. 
Following historical tradition, we extend Sznajdman's result to nonrestrictive algebraic settings. 

\smallskip

Our main result is a differential version of the Artin-Rees lemma with Noetherian operators.
As a consequence, we obtain a metatheorem stating that \emph{``uniform containment type results hold for many classes of rings when they hold for the corresponding reduced ring.''}

\smallskip

Let $R$ be an algebra essentially of finite type over a field $\kk$ and assume that $R$ has no embedded associated primes. 
Let $\Rred = R / \sqrt{0}$ be the reduced ring corresponding to $R$.
For any ideal $\aaa \subset R$ such that $\Ass_R(R/\aaa) \subseteq \Ass(R)$, one can find a finite set of differential operators $\delta_1,\ldots,\delta_m \in \Diff_{R/\kk}(R, \Rred)$ such that 
$$
\aaa \;=\; \lbrace f \in R \mid \delta_i(f) = 0 \text{ for all } 1 \le i \le m \rbrace;
$$
here we say that $\delta_1,\ldots,\delta_m$ is a \emph{set of Noetherian operators describing the ideal $\aaa$} (see \autoref{cor_noeth_ops}); when $\aaa=(0)$, we call such a set a \emph{set of Noetherian operators for $R$}.
Notice that a set of Noetherian operators $\delta_1,\ldots,\delta_m \in \Diff_{R/\kk}(R, \Rred)$ for $R$ yields a $\kk$-linear inclusion $R \hookrightarrow \left(\Rred\right)^m$.
Given an ideal $I \subset \Rred$ and a finite set of differential operators $\delta_1,\ldots,\delta_m \in \Diff_{R/\kk}(R, \Rred)$, we set the following notation
$$
I :_{\Rred} \lbrace\delta_1,\ldots,\delta_m\rbrace \;\colonequals\; \lbrace f \in R \mid \delta_i(f) \in I \text{ for all } 1 \le i \le m \rbrace \;=\; \bigcap_{i=1}^m \delta_i^{-1}(I). 
$$

We now present our main result.

\begin{headthm}[Differential Artin-Rees]
	\label{thm:main}
	Let $R$ be an algebra essentially of finite type over a field $\kk$ and assume that $R$ has no embedded associated primes. 
	Let $\Rred = R / \sqrt{0}$.
	Then, there exists a set of  Noetherian operators $\delta_1,\ldots,\delta_m \in \Diff_{R/\kk}(R, \Rred)$ for $R$ and an integer $c\ge0$ such that 
	$$
	I^{n+c} :_{\Rred} \lbrace\delta_1,\ldots,\delta_m\rbrace \;\subseteq\; J^n
	$$ 
	for any ideal $J$ in $R$ with image $I$ in $\Rred$ and any $n \ge 0$.
\end{headthm}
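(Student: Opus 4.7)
The plan is to proceed by induction on the nilpotency index $N$ of $\fn := \sqrt{0} \subseteq R$, which is finite because $R$ is Noetherian. The base case $N = 1$ is trivial: $R$ is already reduced, and one takes $m = 1$, $\delta_1 := \Id_R$, and $c := 0$, so that $I^n :_{\Rred} \{\Id_R\} = I^n = J^n$.

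For the inductive step, set $R_1 := R/\fn^{N-1}$, which has nilpotency index at most $N-1$ and the same reduction $\Rred$. The inductive hypothesis yields Noetherian operators $\bar\delta_1, \ldots, \bar\delta_p \in \Diff_{R_1/\kk}(R_1, \Rred)$ and a constant $c_1 \ge 0$ such that $I^{n+c_1} :_{\Rred} \{\bar\delta_i\} \subseteq J_1^n$ for every ideal $J_1 \subseteq R_1$ with image $I$ in $\Rred$. Lifting via the quotient $\pi \colon R \to R_1$, I set $\tilde\delta_i := \bar\delta_i \circ \pi \in \Diff_{R/\kk}(R,\Rred)$. Using the existence of Noetherian operators for $R$ recalled just before the theorem, I complete $\{\tilde\delta_i\}$ to a full Noetherian set $\{\tilde\delta_i\}\cup\{\delta_1^*,\ldots,\delta_q^*\}$ with each $\delta_j^* \in \Diff_{R/\kk}(R,\Rred)$ of order at most $t^*$. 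Since the $\tilde\delta_i$ annihilate $\fn^{N-1}$ and the combined family is Noetherian for $R$, the restrictions $\delta_j^*|_{\fn^{N-1}}$ form a Noetherian family of $\Rred$-differential operators on the finitely generated $\Rred$-module $\fn^{N-1}$ (well-defined because $\fn \cdot \fn^{N-1} = 0$). Setting $c := c_1 + t^* + c_2$, where $c_2$ is the constant supplied by a ``module'' version of the theorem applied to $\fn^{N-1}$, consider $f \in R$ with $\tilde\delta_i(f), \delta_j^*(f) \in I^{n+c}$ for all $i, j$. The conditions on the $\tilde\delta_i$ give $\bar\delta_i(\pi(f)) \in I^{n+c}$, so $\pi(f) \in J_1^{n+c-c_1}$ by induction; this yields $f = g + h$ with $g \in J^{n+c-c_1} \subseteq J^n$ and $h \in \fn^{N-1}$. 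The Leibniz rule gives $\delta_j^*(g) \in I^{n+c-c_1-t^*}$, hence $\delta_j^*(h) = \delta_j^*(f) - \delta_j^*(g) \in I^{n+c_2}$; the module version then produces $h \in I^n \fn^{N-1} \subseteq J^n$, and $f = g + h \in J^n$ completes the induction.

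The main obstacle is establishing the ``module version'' of the theorem invoked above: a uniform differential Artin--Rees statement for a finitely generated $\Rred$-module equipped with a Noetherian family of $\Rred$-differential operators to $\Rred$. I would obtain this by applying the ring version of the theorem to the trivial extension $\Rred \ltimes \fn^{N-1}$, which has nilpotency index at most $2$; this requires a direct proof of the nilpotency-$2$ case of the ring theorem as the anchor of the induction. For the nilpotency-$2$ case (where $\fn^2 = 0$), I expect a direct argument to proceed by applying Huneke's uniform Artin--Rees to the $\Rred$-submodule of $\Rred^m$ generated by $\iota(R)$ and then unwinding the Leibniz commutators $[\delta_i, m_j]$, which have strictly lower order than $\delta_i$; because $\fn^2 = 0$, a finite ``peeling'' of orders combined with the vanishing of products from $\fn$ controls the corrections. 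Uniformity in $J$ throughout is ensured by Huneke's uniform bounds available for $\Rred$, which is essentially of finite type over $\kk$.
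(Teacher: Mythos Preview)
Your inductive framework reduces everything to the nilpotency-$2$ case, and the trivial-extension trick to pass from the module version to the ring version is reasonable (modulo checking that $\Rred \ltimes \fn^{N-1}$ has no embedded primes, which does follow from $\Ass_{\Rred}(\fn^{N-1}) \subseteq \Ass(\Rred)$). However, the nilpotency-$2$ sketch is where the entire content of the theorem lies, and as written it does not go through. The map $\iota : R \to \Rred^m$ is only $\kk$-linear, so the ``$\Rred$-submodule generated by $\iota(R)$'' in $\Rred^m$ bears no useful relation to $\iota(J^n)$; knowing $\iota(f) \in I^{n+c}\Rred^m$ and applying uniform Artin--Rees to that submodule does not let you conclude $f \in J^n$. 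Likewise, ``unwinding Leibniz commutators'' does not propagate the hypothesis: from $\delta_i(f)\in I^{n+c}$ you get no control over $[\delta_i,r](f)=\delta_i(rf)-r\delta_i(f)$ without already controlling $\delta_i(rf)$, so peeling orders in this direction stalls immediately.

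The paper supplies exactly the missing device. It takes a finer filtration $0=\aaa_0\subsetneqq\cdots\subsetneqq\aaa_k=R$ in which each quotient $\aaa_i/\aaa_{i-1}$ embeds into a \emph{domain} $R/\pp$ with $\pp\in\Ass(R)$, and then for each step produces, by a minimal-order argument, a differential operator $\delta_i\in\Diff_{R/\kk}(R,\Rred)$ whose restriction to $\aaa_i$ is genuinely $R$-linear and kills $\aaa_{i-1}$: one takes $\delta_i$ of least order with $\delta_i(\aaa_{i-1})=0$ but $\delta_i(\aaa_i)\neq0$, and then every commutator $[\delta_i,r]$ has strictly smaller order yet still kills $\aaa_{i-1}$, so by minimality it must kill all of $\aaa_i$. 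Because the target $R/\pp$ is a domain, this $R$-linear restriction differs from the chosen embedding $\psi:\aaa_i/\aaa_{i-1}\hookrightarrow R/\pp$ by multiplication by a single nonzero element of $\Quot(R/\pp)$, and one can then apply Huneke's uniform Artin--Rees inside $R/\pp$ (once to the principal ideal generated by that element, once to $\IM(\psi)$) to descend the filtration. This minimal-order trick, converting a differential condition into an $R$-linear one on each piece, is the key idea your proposal is missing.
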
	

Complementary to \autoref{thm:main}, we also have a quite basic reverse inclusion (see \autoref{rem_other_containment}).
The main corollary is the algebraic analogue of the result of Sznajdman \cite[Theorem~1.2]{Sznajdman}.

\begin{headcor}[Brian\c{c}on-Skoda theorem for nonreduced rings]
	\label{cor:Briancon_Skoda}
	Under the assumptions and notation of \autoref{thm:main}, there exists a set of  Noetherian operators $\delta_1,\ldots,\delta_m \in \Diff_{R/\kk}(R, \Rred)$ for $R$ and an integer $c\ge0$ such that 
	$$
	\overline{I^{n+c}} :_{\Rred} \lbrace\delta_1,\ldots,\delta_m\rbrace \;\subseteq\; J^n
	$$ 
	for any ideal $J$ in $R$ with image $I$ in $\Rred$ and any $n \ge 0$.
\end{headcor}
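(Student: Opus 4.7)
The plan is to combine \autoref{thm:main} with Huneke's classical uniform Brian\c{c}on-Skoda theorem for the reduced ring $\Rred$, which handles the passage from the integral closure to an ordinary power.

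First, since $R$ is essentially of finite type over the field $\kk$, the reduced ring $\Rred = R/\sqrt{0}$ is itself a reduced ring essentially of finite type over $\kk$. This places $\Rred$ squarely within the hypotheses of Huneke's uniform Brian\c{c}on-Skoda theorem (see \cite{HUNEKE_UNIFORM}): there exists a constant $c_1\ge 0$, depending only on $\Rred$, such that for every ideal $I\subset \Rred$ and every $n\ge 0$ one has
$$
\overline{I^{n+c_1}} \;\subseteq\; I^n.
$$

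Next, I would invoke \autoref{thm:main} applied to $R$ to obtain a set of Noetherian operators $\delta_1,\ldots,\delta_m \in \Diff_{R/\kk}(R,\Rred)$ for $R$ and a constant $c_2\ge 0$ such that
$$
I^{n+c_2} :_{\Rred} \lbrace\delta_1,\ldots,\delta_m\rbrace \;\subseteq\; J^n
$$
for every ideal $J$ in $R$ with image $I$ in $\Rred$ and every $n\ge 0$.

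Setting $c = c_1 + c_2$, I would conclude as follows. Fix an ideal $J\subset R$ with image $I\subset \Rred$ and $n\ge 0$, and take any $f \in \overline{I^{n+c}} :_{\Rred} \lbrace \delta_1,\ldots,\delta_m\rbrace$. By definition, $\delta_i(f) \in \overline{I^{n+c}} = \overline{I^{(n+c_2)+c_1}}$ for each $i$, and the uniform Brian\c{c}on-Skoda bound for $\Rred$ gives $\delta_i(f) \in I^{n+c_2}$. Therefore $f \in I^{n+c_2} :_{\Rred} \lbrace\delta_1,\ldots,\delta_m\rbrace$, and \autoref{thm:main} yields $f \in J^n$, proving the desired containment.

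The only nontrivial ingredient here beyond \autoref{thm:main} is the availability of a uniform Brian\c{c}on-Skoda constant for $\Rred$, so the main point to verify carefully is that $\Rred$ lies in a class of reduced rings for which Huneke's theorem produces a constant $c_1$ that is simultaneously valid for all ideals $I\subset \Rred$; once this is in place, the combination is immediate and uses no further structure of the Noetherian operators beyond what \autoref{thm:main} already provides.
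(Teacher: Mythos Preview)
Your proof is correct and is essentially identical to the paper's own argument: both combine \autoref{thm:main} with Huneke's uniform Brian\c{c}on-Skoda theorem for $\Rred$ and set the constant to be the sum of the two constants (the paper merely swaps the roles of the labels $c_1$ and $c_2$). Your closing remark about verifying that $\Rred$ satisfies the hypotheses of \cite[Theorem~4.13]{HUNEKE_UNIFORM} is the only point not made explicit in the paper, and it is indeed satisfied since $\Rred$ is reduced and essentially of finite type over a field.
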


Our second application is to establish an analogue of a celebrated containment statement of symbolic powers for not necessarily regular rings in the case $\Rred$ is regular (see \cite{HH_SYMB_POWERS, EIN_LAZ_SMITH,MS_SYMB}). 

\begin{headcor}[Containment of symbolic powers for nonregular rings]
	\label{cor:symb_powers}
	Keep the assumptions and notation of \autoref{thm:main}.
	Moreover, assume that $\Rred$ is a regular ring of finite dimension $d$.
	Then, there exists a set of  Noetherian operators $\delta_1,\ldots,\delta_m \in \Diff_{R/\kk}(R, \Rred)$ for $R$ and an integer $c\ge0$ such that 
	$$
	I^{(nd+c)} :_{\Rred} \lbrace\delta_1,\ldots,\delta_m\rbrace \;\subseteq\; J^n
	$$ 
	for any ideal $J$ in $R$ with image $I$ in $\Rred$ and any $n \ge 0$.
\end{headcor}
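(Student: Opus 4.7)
The plan is to combine \autoref{thm:main} (the Differential Artin--Rees theorem) with the celebrated symbolic power containment theorem of Ein--Lazarsfeld--Smith and Hochster--Huneke, which asserts that in a regular ring of finite dimension $d$ one has the uniform containment $I^{(kd)} \subseteq I^k$ for every ideal $I$ and every integer $k \ge 0$. Since \autoref{thm:main} already provides uniformity over all preimages $J$ of $I$, and the ELS/HH theorem provides uniformity over all ideals $I$ in the regular ring $\Rred$, composing the two should give the desired uniform bound involving symbolic powers.

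Concretely, I would first invoke \autoref{thm:main} to produce a set of Noetherian operators $\delta_1,\ldots,\delta_m \in \Diff_{R/\kk}(R, \Rred)$ for $R$ and an integer $c_0 \ge 0$ such that
\[
I^{n+c_0} :_{\Rred} \lbrace\delta_1,\ldots,\delta_m\rbrace \;\subseteq\; J^n
\]
for every ideal $J \subseteq R$ with image $I$ in $\Rred$ and every $n \ge 0$. Then I would set $c \colonequals c_0 \cdot d$ and apply the ELS/HH bound in $\Rred$, which is valid since $\Rred$ is a regular ring essentially of finite type over $\kk$ of dimension $d$. Using that the sequence of symbolic powers is a decreasing filtration, this yields
\[
I^{(nd+c)} \;=\; I^{((n+c_0)d)} \;\subseteq\; I^{n+c_0},
\]
and taking the colon with $\lbrace\delta_1,\ldots,\delta_m\rbrace$ on both sides gives
\[
I^{(nd+c)} :_{\Rred} \lbrace\delta_1,\ldots,\delta_m\rbrace \;\subseteq\; I^{n+c_0} :_{\Rred} \lbrace\delta_1,\ldots,\delta_m\rbrace \;\subseteq\; J^n,
\]
as required.

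The bulk of the work is entirely absorbed into \autoref{thm:main}; once the differential Artin--Rees lemma is in hand, the corollary becomes a direct plug-in of the classical symbolic power bound in $\Rred$. The only point I expect to require some care is verifying that the version of the ELS/HH theorem being used is valid for arbitrary (not necessarily radical) ideals $I \subseteq \Rred$, since the symbolic powers appearing in the statement are indexed by such $I$; this is, however, covered by the standard formulations of the theorem and poses no real obstacle.
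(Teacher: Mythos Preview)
Your proposal is correct and matches the paper's proof essentially verbatim: the paper also invokes \autoref{thm:main} to obtain a constant $c_1$, sets $c=c_1 d$, and then chains the containment $I^{(nd+c)} = I^{((n+c_1)d)} \subseteq I^{n+c_1}$ from Hochster--Huneke with the colon inclusion from \autoref{thm:main}. Your remark about needing the symbolic power bound for arbitrary (not necessarily radical) ideals is apt; the paper simply cites \cite[Theorem~1.1]{HH_SYMB_POWERS} for this.
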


\section{Proofs of our results}	

In this section, we provide the proofs for all our results.
We start with the following basic lemma showing the existence of certain special filtrations we need.

	\begin{lemma}
	\label{lem_filtration}
	Let $R$ be a Noetherian ring.
	Then, there is a filtration of $R$ by ideals 
	$$
	0 = \aaa_0 \,\subsetneqq\, \aaa_1 \,\subsetneqq\, \cdots \,\subsetneqq\, \aaa_k = R
	$$
	satisfying the following: 
	\begin{samepage}
	\begin{enumerate}[\rm (i)]
		\item\label{part-1} $\aaa_i/\aaa_{i-1}$ is isomorphic to a submodule of $R/\pp$ for some $\pp \in \Ass(R)$.
		\item\label{part-2}  $\Ass_R(R/\aaa_{i-1}) \subseteq \Ass(R)$ for $1\le i \le k$.
		\item\label{part-3} $\aaa_{k-1} = \pp$ for some $\pp \in \Ass(R)$.
	\end{enumerate}
	\end{samepage}
\end{lemma}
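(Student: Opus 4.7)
The plan is to construct the filtration inductively by repeatedly extracting the socle associated to a maximal associated prime of the current quotient. Starting from $\aaa_0 = 0$, at each step I extend $\aaa_{i-1}$ (satisfying $\Ass(R/\aaa_{i-1}) \subseteq \Ass(R)$) to a chain of strictly larger ideals whose successive quotients embed in $R/\pp$ for an appropriate $\pp \in \Ass(R)$, while preserving the invariant (ii) at each intermediate step. Noetherianity of $R$ ensures termination, and condition (iii) will follow automatically at the last step.

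Concretely, at the $i$-th step, pick $\pp$ maximal in $\Ass(R/\aaa_{i-1})$ (necessarily $\pp \in \Ass(R)$), and form the colon ideal $\bb := \aaa_{i-1} :_R \pp \supsetneq \aaa_{i-1}$. Its quotient $N := \bb/\aaa_{i-1}$ is the $\pp$-socle of $R/\aaa_{i-1}$: annihilated by $\pp$, with $\Ass_R(N) = \{\pp\}$ by the maximality of $\pp$, hence a finitely generated torsion-free module over the domain $R/\pp$. Filter $N$ by saturated submodules $0 = N_0 \subsetneq N_1 \subsetneq \cdots \subsetneq N_t = N$ (built inductively by picking a cyclic submodule and taking its saturation), so that each quotient $N_j/N_{j-1}$ is a rank-one torsion-free $R/\pp$-module, embedding as a nonzero ideal of $R/\pp$. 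Lifting to ideals $\aaa_{i-1} = \aaa^{(0)} \subsetneq \aaa^{(1)} \subsetneq \cdots \subsetneq \aaa^{(t)} = \bb$ gives the next segment of the filtration, and condition (i) holds for each sub-step by construction.

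The main obstacle is condition (ii), for which the crucial input is the module-level lemma: if $\pp$ is maximal in $\Ass(M)$ and $N = (0 :_M \pp)$, then $\Ass(M/N) \subseteq \Ass(M)$. To prove it, let $\qqq = \Ann_R(\bar w) \in \Ass(M/N)$ with $w \in M \setminus N$. If $\pp \not\subseteq \qqq$, pick $p \in \pp \setminus \qqq$ and verify that $\Ann_R(pw) = (\Ann_R(w) :_R p) = \qqq$, which places $\qqq$ in $\Ass(M)$. If $\pp \subseteq \qqq$, observe that $\pp w$ is a nonzero submodule of $N$ with $\Ass_R(\pp w) = \{\pp\}$ (again by the maximality of $\pp$), so $\sqrt{\Ann_R(\pp w)} = \pp$; since $\qqq = \Ann_R(\pp w)$ by definition and $\qqq$ is prime, $\qqq = \pp \in \Ass(M)$. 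Applying this lemma to $R/\aaa_{i-1}$ yields $\Ass(R/\bb) \subseteq \Ass(R)$, and combining with the exact sequence $0 \to N/N_j \to R/\aaa^{(j)} \to R/\bb \to 0$ and $\Ass_R(N/N_j) \subseteq \{\pp\}$ gives (ii) at each intermediate ideal. Finally, for (iii), the last step of the completed filtration produces a cyclic quotient $R/\aaa_{k-1}$ embedding in $R/\pp$ for some $\pp \in \Ass(R)$; since $R/\pp$ is a domain and $R/\aaa_{k-1}$ is cyclic, this forces the embedding to be an isomorphism, so $\aaa_{k-1} = \pp$ exactly.
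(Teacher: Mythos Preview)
Your argument is correct and self-contained, but it takes a genuinely different route from the paper's. The paper simply fixes some $\pp_0 \in \Ass(R)$ and invokes an external result (\cite[Lemma~3.5.3]{FLENNER_O_CARROLL_VOGEL}) to obtain a filtration $0=\aaa_0\subsetneqq\cdots\subsetneqq\aaa_{k-1}=\pp_0$ of the module $\pp_0$ with successive quotients embedding in $R/\pp$ for $\pp\in\Ass(\pp_0)\subseteq\Ass(R)$; appending $\aaa_k=R$ then gives (i) and (iii) for free, and (ii) is deduced \emph{a~posteriori} from the induced filtration of $R/\aaa_{i-1}$ by the $\aaa_j/\aaa_{i-1}$ for $j\ge i-1$. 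You instead build the chain from scratch by repeatedly peeling off the $\pp$-socle for $\pp$ maximal in $\Ass(R/\aaa_{i-1})$ and refining it into rank-one torsion-free pieces over $R/\pp$; since you must know at each stage that the chosen $\pp$ lies in $\Ass(R)$, you need your auxiliary lemma $\Ass\bigl(M/(0:_M\pp)\bigr)\subseteq\Ass(M)$ to propagate (ii) inductively. Your approach is longer but entirely elementary; the paper's is short only because the substantive filtration step is outsourced to the reference.

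One minor wording issue in your treatment of (iii): the embedding $R/\aaa_{k-1}\hookrightarrow R/\pp$ need not be an isomorphism (think of $R/\pp\hookrightarrow R/\pp$ given by multiplication by a nonunit). The correct conclusion is that the image of $1$ is a nonzero element of the domain $R/\pp$, whose annihilator is therefore exactly $\pp$, whence $\aaa_{k-1}=\pp$. Your conclusion stands; only the phrase ``forces the embedding to be an isomorphism'' should be replaced by this annihilator computation.
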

\begin{proof}
	Let $\pp_0 \in \Ass(R)$.
	By \cite[Lemma 3.5.3]{FLENNER_O_CARROLL_VOGEL},  there exists a filtration ${0 = \aaa_0 \,\subsetneqq\, \aaa_1 \,\subsetneqq\, \cdots \,\subsetneqq\, \aaa_{k-1} = \pp_0}$ such that each $\aaa_i/\aaa_{i-1}$ is isomorphic to an ideal of $R/\pp$ for some associated prime $\pp$ of $\pp_0$ considered as a submodule of $R$; setting $\aaa_k = R$ then yields a filtration evidently satisfying conditions \hyperref[part-1]{(i)} and \hyperref[part-3]{(iii)}.
	Then condition \hyperref[part-2]{(ii)} holds because $R/\aaa_{i-1}$ has a filtration with successive quotients isomorphic to submodules of $R/\pp$ for some $\pp \in \Ass(R)$.
\end{proof}

We now recall the notion of \emph{differential operators}. 
A general reference in this topic is \cite[\S 16]{EGAIV_IV}.

\begin{definition}
	Let $\kk$ be a field and $R$ be a $\kk$-algebra.
	Let $M, N$ be $R$-modules.
	The $n$-th order $\kk$-linear differential operators $\Diff_{R/\kk}^n(M, N) \subseteq \Hom_\kk(M, N)$ from $M$ to $N$ are defined inductively by:
	\begin{enumerate}[(i)]
		\item $\Diff_{R/\kk}^{0}(M,N) \colonequals \Hom_R(M, N)$.
		\item $\Diff_{R/\kk}^{n}(M, N) \colonequals \big\lbrace \delta \in \Hom_\kk(M,N) \mid [\delta, f] \in \Diff_{R/\kk}^{n-1}(M, N) \text{ for all } f \in R \big\rbrace$.
	\end{enumerate}
	Here we use the bracket notation $[\delta, f](m) \colonequals \delta(fm) - f\delta(m)$ for all $\delta \in \Hom_\kk(M, N)$, $f \in R$, and $m \in M$.
	The $\kk$-linear differential operators from $M$ to $N$ are given by 
	$$
	\Diff_{R/\kk}(M, N) \colonequals \bigcup_{n=0}^\infty \Diff_{R/\kk}^n(M,N).
	$$
\end{definition}

For completeness, we include a lemma that describes the behavior of differential operators under localization. 

\begin{lemma}\label{localization_lemma}
Let $R$ be an algebra essentially of finite type over a field $\kk$, $W\subseteq R$ be a multiplicatively closed subset, and $M,N$ be $R$-modules, with $M$ finitely generated. 
Then
\begin{enumerate}[\rm (i)]
\item\label{diff-part-1} $W^{-1} R \otimes_R \Diff_{R/\kk}(M,N) \cong \Diff_{W^{-1}R/\kk}(W^{-1}M,W^{-1}N)$, and
\item\label{diff-part-2} $\Diff_{R/\kk}(M,W^{-1}N) \cong \Diff_{W^{-1}R/\kk}(W^{-1}M,W^{-1}N)$.
\end{enumerate}
\end{lemma}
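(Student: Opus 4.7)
The plan is to reduce both statements to standard Hom-localization isomorphisms over a Noetherian ring, by using the representability of differential operators through modules of principal parts. For each $n\ge 0$, let $P^n_{R/\kk}$ denote the $R$-module of $n$-th order principal parts, constructed as a quotient of $R\otimes_\kk R$ by a power of the diagonal ideal. Then one has a natural isomorphism
\[
\Diff_{R/\kk}^n(M,N) \;\cong\; \Hom_R\bigl(P^n_{R/\kk} \otimes_R M,\; N\bigr)
\]
(see \cite[\S 16.8]{EGAIV_IV}). Two preliminary facts will drive the argument: (a) because $R$ is essentially of finite type over $\kk$, each $P^n_{R/\kk}$ is a finitely generated $R$-module, and hence so is $P^n_{R/\kk}\otimes_R M$ whenever $M$ is finitely generated; and (b) formation of principal parts commutes with localization, that is, $W^{-1}P^n_{R/\kk} \cong P^n_{W^{-1}R/\kk}$, which follows from the explicit construction together with flatness of localization.

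For part (i), I would use fact (a) together with the standard isomorphism $W^{-1}\Hom_R(L,N) \cong \Hom_{W^{-1}R}(W^{-1}L, W^{-1}N)$ valid for any finitely generated $R$-module $L$ over the Noetherian ring $R$. Setting $L = P^n_{R/\kk}\otimes_R M$ and combining with (b) yields
\[
W^{-1}\Diff_{R/\kk}^n(M,N) \;\cong\; \Diff_{W^{-1}R/\kk}^n(W^{-1}M, W^{-1}N).
\]
For part (ii), I would instead invoke the extension-of-scalars adjunction $\Hom_R(L,W^{-1}N) \cong \Hom_{W^{-1}R}(W^{-1}L, W^{-1}N)$, which holds for any $R$-module $L$ because $W$ already acts invertibly on the target, so every $R$-linear map factors uniquely through $W^{-1}L$. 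With the same choice $L = P^n_{R/\kk}\otimes_R M$ and again using (b), this gives the desired identification at each order $n$; note that here the finite generation of $M$ is not needed for the Hom identity, but it is still required to pass back through the principal parts description.

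Both identifications are compatible with the natural inclusions $\Diff_{R/\kk}^n\hookrightarrow \Diff_{R/\kk}^{n+1}$ induced by the surjections $P^{n+1}_{R/\kk}\twoheadrightarrow P^n_{R/\kk}$, so passing to the union over $n$ — and using that filtered colimits commute with tensor product and with localization — yields the claims for the full $\Diff$. The main subtlety I expect is bookkeeping for the two $R$-module structures on $P^n_{R/\kk}$: the tensor product $P^n_{R/\kk}\otimes_R M$ in the representability formula uses one of them, while the $\Hom$ uses the other, and the localization identity in (b) must be applied with respect to the correct structure at each step. Once this bookkeeping is handled consistently, the proof reduces to a diagram chase through these standard identifications.
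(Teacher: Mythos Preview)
Your proposal is correct and follows essentially the same route as the paper: both arguments rest on the representability $\Diff_{R/\kk}^n(M,N)\cong \Hom_R(P^n_{R/\kk}\otimes_R M,N)$, the localization identity $W^{-1}P^n_{R/\kk}\cong P^n_{W^{-1}R/\kk}$, and then the standard Hom--localization isomorphism (using finite generation of $P^n_{R/\kk}\otimes_R M$) for part~(i) and the adjunction/universal property of localization for part~(ii). Your added remarks on compatibility across orders and on the two $R$-module structures on $P^n_{R/\kk}$ are fine elaborations; one small aside---the representability via principal parts does not itself require $M$ to be finitely generated, so that hypothesis is genuinely used only in part~(i).
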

\begin{proof} Let $P^n_{R/\kk}$ denote the module of principal parts of order $n$ of $R$ over $\kk$, cf.,~ \cite[\S 16.3]{EGAIV_IV}. 
By \cite[Theorem~16.4.14]{EGAIV_IV} (cf., \cite[Proposition~2.16]{Quantifying}), for each $n$, we have $W^{-1} P^n_{R/\kk}\cong P^n_{W^{-1}R/\kk}$, so \[W^{-1}(M\otimes_R P^n_{R/\kk}) \cong W^{-1}M\otimes_{W^{-1}R} P^n_{W^{-1}R/\kk}.\]
Then, by the universal property of localization, we have
\begin{samepage}
\begin{align*} 
	\Diff_{R/\kk}^n(M,W^{-1}N) &\cong \Hom_R(M\otimes_R P^n_{R/\kk},W^{-1}N) \\&\cong \Hom_{W^{-1}R}(W^{-1}M\otimes_{W^{-1}R} P^n_{W^{-1}R/\kk},W^{-1}N )\cong \Diff_{W^{-1}R/\kk}^n(W^{-1}M,W^{-1}N),
 \end{align*}
 \end{samepage}
and since $P^n_{R/\kk}$ is finitely generated, we also have
\begin{samepage}\begin{align*}  
		W^{-1} R \otimes_R &\Diff_{R/\kk}^n(M,N) \cong W^{-1} \Hom_R(M\otimes_R P^n_{R/\kk},N)  \\ &\cong \Hom_{W^{-1}R}(W^{-1}M\otimes_{W^{-1}R} P^n_{W^{-1}R/\kk},W^{-1}N)
\cong \Diff_{W^{-1}R/\kk}^n(W^{-1}M,W^{-1}N). \qedhere\end{align*}
\end{samepage}
\end{proof}

We also use the following lemma, which follows along similar lines to \cite[Proposition~1.3]{BRUMFIEL_DIFF_PRIM}.

\begin{lemma}
	\label{order-lemma} 
	Let $R$ be a $\kk$-algebra, $M$, $N$ be $R$-modules, $J$ be an ideal of $R$, and $\delta$ be a differential operator from $M$ to $N$ of order at most $n$. 
	Then for any integer $t\geq 0$, $\delta(J^{n+t} M) \subseteq J^t N$.
\end{lemma}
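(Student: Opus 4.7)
My plan is to prove the statement by a nested induction, with an outer induction on the order $n$ of $\delta$ and an inner induction on $t$. The key tool is the basic identity
$$
\delta(fx) \;=\; [\delta, f](x) + f\,\delta(x),
$$
combined with the defining recursion: if $\delta \in \Diff_{R/\kk}^{n}(M, N)$, then $[\delta, f] \in \Diff_{R/\kk}^{n-1}(M, N)$ for every $f \in R$.

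The outer base case $n = 0$ is immediate since $\Diff_{R/\kk}^{0}(M, N) = \Hom_R(M, N)$, so $\delta$ is $R$-linear and $\delta(J^t M) \subseteq J^t N$ for every $t \ge 0$. For the outer step, fix $n \ge 1$ and assume the conclusion for every operator of order at most $n-1$ (and every exponent). Now I would induct on $t$. The inner base $t = 0$ is trivial, since $J^0 N = N$.

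For the inner step, by $\kk$-linearity and hence additivity of $\delta$, it suffices to verify the containment on elements of the form $fx$ with $f \in J$ and $x \in J^{n+t-1} M$, since such products generate $J^{n+t} M = J \cdot J^{n+t-1} M$ as an additive group. Applying the identity above, the first summand $[\delta, f](x)$ lies in $J^t N$ by the outer hypothesis, because $[\delta, f]$ has order at most $n-1$ and $x \in J^{(n-1)+t} M$. The second summand $f \cdot \delta(x)$ lies in $J \cdot J^{t-1} N \subseteq J^t N$ by the inner hypothesis applied to $x \in J^{n + (t-1)} M$. Summing these containments yields $\delta(fx) \in J^t N$, which closes both inductions.

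There is essentially no substantive obstacle here, since the argument is a standard two-parameter induction keyed to the Leibniz rule $\delta(fx) - f\delta(x) = [\delta,f](x)$. The only thing requiring a bit of care is bookkeeping: in each invocation of a hypothesis one must match the exponent on $J$ to the shift produced by the recursive definition of $\Diff_{R/\kk}^n$, so that the drop in order by one and the drop in exponent by one line up with the decomposition $n+t = (n-1) + t = n + (t-1)$ in the two respective summands.
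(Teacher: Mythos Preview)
Your proof is correct. The paper does not supply its own argument for this lemma; it simply remarks that it ``follows along similar lines to \cite[Proposition~1.3]{BRUMFIEL_DIFF_PRIM}.'' Your double induction on $(n,t)$ via the commutator identity $\delta(fx)=[\delta,f](x)+f\,\delta(x)$ is exactly the standard route and is the argument one finds in Brumfiel's proposition, so there is nothing to contrast.
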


The following theorem allows us to describe primary ideals with differential operators.

\begin{theorem}[{\cite{NOETH_OPS}}]
	\label{thm_noeth_ops}
	Let $R$ be an algebra essentially of finite type over a field $\kk$, $\pp \in \Spec(R)$ a prime ideal and $Q$ be a $\pp$-primary ideal.
	Then, there exists a finite set of differential operators $\delta_1,\ldots,\delta_m \in \Diff_{R/\kk}(R, R/\pp)$ such that 
	$$
	Q \;=\; \lbrace f \in R \mid \delta_i(f) = 0 \;\text{ for all \;$0 \le i \le m$}\rbrace.
	$$	
\end{theorem}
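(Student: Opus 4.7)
The plan is to reduce to an Artinian local setting, produce differential operators via a duality involving the module of principal parts, and then descend back to $R$.

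First, I would localize at $\pp$: by \autoref{localization_lemma}, elements of $\Diff_{R_\pp/\kk}(R_\pp, \kappa(\pp))$, where $\kappa(\pp)\colonequals R_\pp/\pp R_\pp$, are in natural bijection with elements of $\Diff_{R/\kk}(R, \kappa(\pp))$. So it suffices to produce finitely many $\tilde\delta_i \in \Diff_{R_\pp/\kk}(R_\pp, \kappa(\pp))$ whose common kernel is $QR_\pp$; since $Q = QR_\pp \cap R$ (as $Q$ is $\pp$-primary), pulling back along $R \to R_\pp$ and clearing a single nonzero denominator in $R/\pp$---possible because each $\tilde\delta_i$ factors through the finitely generated $R_\pp$-module of principal parts $P^{N-1}_{R_\pp/\kk}$ for a suitable $N$, so its image sits inside a finitely generated $R/\pp$-submodule of $\kappa(\pp)$---yields operators in $\Diff_{R/\kk}(R, R/\pp)$ with unchanged common kernel $Q$.

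Next, in the local setting, since $QR_\pp$ is $\pp R_\pp$-primary in a Noetherian local ring, $(\pp R_\pp)^N \subseteq QR_\pp$ for some $N$, and $R_\pp/QR_\pp$ has finite length over $R_\pp$ with all composition factors isomorphic to $\kappa(\pp)$; in particular, it is a finite-dimensional $\kappa(\pp)$-vector space. Using the identification $\Diff^{N-1}_{R_\pp/\kk}(R_\pp, \kappa(\pp)) \cong \Hom_{R_\pp}(P^{N-1}_{R_\pp/\kk}, \kappa(\pp))$, the key step is to show that the evaluation pairing
\[
R_\pp/QR_\pp \;\longrightarrow\; \Hom_{\kappa(\pp)}\!\bigl(\Diff^{N-1}_{R_\pp/\kk}(R_\pp, \kappa(\pp)),\, \kappa(\pp)\bigr), \qquad f \mapsto \bigl(\delta \mapsto \delta(f)\bigr)
\]
is injective. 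Granted this, a basis of $\kappa(\pp)$-linear functionals on the finite-dimensional space $R_\pp/QR_\pp$ pulls back to a finite family of differential operators $\tilde\delta_1, \ldots, \tilde\delta_m$ whose common kernel on $R_\pp$ equals $QR_\pp$, completing the construction modulo the descent step above.

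The main obstacle is establishing the injectivity of the evaluation pairing---that is, producing enough differential operators of order at most $N-1$ from $R_\pp$ to $\kappa(\pp)$ to detect every nonzero element of the Artinian local ring $R_\pp/QR_\pp$. This is the heart of the classical Noetherian-operator theorem and depends crucially on the essentially-of-finite-type hypothesis, which controls the structure of $P^{N-1}_{R_\pp/\kk}$: concretely, one expects to build the $\tilde\delta_i$ via a coefficient-field-like splitting of $R_\pp/(\pp R_\pp)^N$ when the residue-field extension is separable, and through more delicate Brumfiel--Oberst-style arguments (tracking the interaction of the two $R$-module structures on the principal parts) in general.
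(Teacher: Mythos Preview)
The paper does not prove this theorem; it is quoted from \cite{NOETH_OPS} and used as input for the subsequent results (notably \autoref{cor_noeth_ops}). There is therefore no proof in the paper against which to compare your proposal.

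As for the proposal itself, the overall localize--construct--descend strategy is the standard one, and your descent step (clearing a denominator to pass from $\kappa(\pp)$ back to $R/\pp$) is exactly the maneuver the paper carries out in the proof of \autoref{cor_noeth_ops}. Two points deserve more care. First, the claim that $R_\pp/QR_\pp$ ``is a finite-dimensional $\kappa(\pp)$-vector space'' does not follow from the existence of a composition series with $\kappa(\pp)$-factors; there is no $\kappa(\pp)$-module structure in general without a coefficient field, and arranging one (or working around its absence) is part of the substance of the cited result. Second, injectivity of your evaluation pairing only says that for each $f \notin QR_\pp$ there is \emph{some} operator $\delta$ of order $\le N-1$ with $\delta(f)\neq 0$; it does not by itself produce operators that annihilate $QR_\pp$, which is what you need for the common kernel to equal $QR_\pp$ rather than some smaller ideal. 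The arguments in the literature (Brumfiel, Oberst, and \cite{NOETH_OPS}) address both issues together, and you are right that this is precisely where the essentially-of-finite-type hypothesis earns its keep.
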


	A set of operators satisfying the conclusion of \autoref{thm_noeth_ops} is called a set of \emph{Noetherian operators describing $Q$}.
	We shall use the following setup for the rest of this section. 
	
\begin{setup}
	\label{setup}
	Let $R$ be an algebra essentially of finite type over a field $\kk$ and assume that $R$ has no embedded associated primes. 
	Let $\Rred \colonequals R / \sqrt{0}$ be the reduced ring corresponding to $R$.
	Let $\pi : R \rightarrow \Rred$ be the natural projection.
\end{setup}

A standard application of \autoref{thm_noeth_ops}	yields the possibility of describing certain (not necessarily primary) ideals in terms of differential operators. 	
For arbitrary ideals, we can use the notion of \emph{differential primary decomposition} from \cite{PRIM_DIFF_DEC}. 
	
\begin{corollary}
	\label{cor_noeth_ops}
	Assume \autoref{setup}.
	Let $\aaa \subset R$ be an ideal such that $\Ass_R(R/\aaa) \subseteq \Ass(R)$.
	There exists a set of Noetherian operators $\delta_1,\ldots,\delta_m \in \Diff_{R/\kk}(R, \Rred)$ describing the ideal $\aaa$; that is, 
	$$
	\aaa = \lbrace f \in R \mid \delta_i(f)=0 \;\text{ for all }\; 1 \le i \le m\rbrace.
	$$
\end{corollary}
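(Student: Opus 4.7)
The plan is to combine primary decomposition with \autoref{thm_noeth_ops} and then glue together the resulting operators so that they take values in $\Rred$ rather than in the individual residue rings $R/\pp_i$.

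More precisely, since $R$ has no embedded associated primes, every prime in $\Ass_R(R/\aaa) \subseteq \Ass(R)$ is a minimal prime of $R$. An irredundant primary decomposition of $\aaa$ therefore has the form $\aaa = Q_1 \cap \cdots \cap Q_s$ with each $Q_i$ being $\pp_i$-primary for some minimal prime $\pp_i$ of $R$. Applying \autoref{thm_noeth_ops} separately to each $Q_i$, I obtain operators $\delta_{i,1}, \ldots, \delta_{i,m_i} \in \Diff_{R/\kk}(R, R/\pp_i)$ that describe $Q_i$, and intersecting yields $\aaa = \{ f \in R \mid \delta_{i,j}(f) = 0 \text{ for all } i,j \}$. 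The sole remaining issue is that these operators take values in the various $R/\pp_i$ rather than in the single module $\Rred$.

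To handle this, let $\pp_1, \ldots, \pp_r$ denote the full list of minimal primes of $R$, which contains as a subset those appearing in the primary decomposition. For each $i \in \{1, \ldots, s\}$, I pick by prime avoidance an element $s_i \in \bigcap_{j \neq i} \pp_j$ with $s_i \notin \pp_i$. Then $s_i \pp_i \subseteq \bigcap_{j=1}^r \pp_j = \sqrt{0}$, so multiplication by $s_i$ descends to a well-defined $R$-linear map $\mu_i \colon R/\pp_i \to \Rred$ given by $\bar f \mapsto \overline{s_i f}$. Primality of $\pp_i$ together with $s_i \notin \pp_i$ makes $\mu_i$ injective. Since post-composition with an $R$-linear map preserves differential operators (via the identity $[\mu_i \circ \delta, r] = \mu_i \circ [\delta, r]$ and induction on the order), the operators $\tilde\delta_{i,j} \colonequals \mu_i \circ \delta_{i,j}$ lie in $\Diff_{R/\kk}(R, \Rred)$, and injectivity of $\mu_i$ gives $\tilde\delta_{i,j}(f) = 0$ if and only if $\delta_{i,j}(f) = 0$. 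The finite collection $\{ \tilde\delta_{i,j} \}_{i,j}$ is then the desired set of Noetherian operators for $\aaa$.

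The main point requiring care is the construction of the injections $\mu_i$, which hinges on the containment $s_i \pp_i \subseteq \sqrt{0}$; this is exactly why prime avoidance must be applied against the full list of minimal primes of $R$, not merely those appearing in the primary decomposition of $\aaa$. Beyond this verification, the proof is a direct assembly of \autoref{thm_noeth_ops} and primary decomposition, and I do not foresee any substantial obstacle.
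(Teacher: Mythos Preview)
Your argument is correct and in fact more elementary than the paper's. Both proofs begin with a primary decomposition $\aaa = Q_1 \cap \cdots \cap Q_s$ and invoke \autoref{thm_noeth_ops} for each component; the difference lies in how the resulting operators are made to land in $\Rred$. The paper proceeds via localization: it uses \autoref{localization_lemma} to identify $W^{-1}\Diff_{R/\kk}(R,\Rred)$ with $\bigoplus_{\pp\in\Ass(R)}\Diff_{R_\pp/\kk}(R_\pp,R_\pp/\pp R_\pp)$, applies \autoref{thm_noeth_ops} to the localized primary ideals $Q_iR_{\pp_i}$, and then clears denominators by a nonzerodivisor $w\in W$ on $\Rred$. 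You instead apply \autoref{thm_noeth_ops} globally to obtain operators into $R/\pp_i$ and then post-compose with the explicit $R$-linear injection $\mu_i\colon R/\pp_i\hookrightarrow\Rred$, $\bar f\mapsto\overline{s_if}$, built from prime avoidance. Your route avoids the machinery of principal parts and \autoref{localization_lemma} entirely, at the cost of the small verification that $s_i\pp_i\subseteq\sqrt{0}$ (which you handle correctly by choosing $s_i$ in all the other minimal primes of $R$, not just those occurring in the decomposition of $\aaa$). The paper's approach, on the other hand, gives a more structural picture of $\Diff_{R/\kk}(R,\Rred)$ after localization, which is of independent interest.
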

\begin{proof}
Consider the multiplicatively closed set $W\colonequals R \setminus \bigcup_{\pp \in \Ass(R)}\pp$.
Let $S\colonequals W^{-1}R$.
By \autoref{localization_lemma}, we have the following isomorphisms 
\begin{align*}
	W^{-1}R \otimes_R \Diff_{R/\kk}(R, \Rred) &\;\cong\; \Diff_{S/\kk}(S, \Quot(\Rred))  \\
	&\;\cong\; \bigoplus_{\pp \in \Ass(R)} \Diff_{S/\kk}(R, R_\pp/\pp R_\pp)  \\
	&\;\cong\; \bigoplus_{\pp \in \Ass(R)} \Diff_{R_\pp/\kk}(R_\pp, R_\pp/\pp R_\pp).  
\end{align*}
Therefore, for any associated prime $\pp \in \Ass(R)$ and any differential operator $\delta' \in \Diff_{R_\pp/\kk}(R_\pp, R_\pp/\pp R_\pp)$, we can write $\delta' = \frac{1}{w} \cdot \delta$ for some $\delta \in \Diff_{R/\kk}(R, \Rred)$ and $w \in W$.

 Write an irredundant primary decomposition $\aaa = Q_1 \cap \cdots \cap Q_\ell$ where $Q_i$ is a $\pp_i$-primary ideal for some $\pp_i \in \Ass(R)$.
	From \autoref{thm_noeth_ops}, we obtain a set of Noetherian operators $$
	\delta_{i,1}',\ldots,\delta_{i, m_i}' \;\in\; \Diff_{R_{\pp_i}/\kk}(R_{\pp_i}, R_{\pp_i}/\pp_i R_{\pp_i})
	$$ 
	describing the $\pp_iR_{\pp_i}$-primary ideal $Q_iR_{\pp_i}$.
	We can write $\delta_{i,j}' = \frac{1}{w_{i,j}}\cdot \delta_{i,j}$  with $\delta_{i,j} \in \Diff_{R/\kk}(R, \Rred)$ and $w_{i,j} \in W$.
	Since $w_{i,j}$ is a nonzerodivisor on $\Rred$, we have
	\[ f\in Q_i \quad \text{if and only if} \quad \delta'_{i,j}(f) = 0 \ \text{for all $j$} \quad \text{if and only if} \quad \delta_{i,j}(f) = w_{i,j} \delta'_{i,j}(r) = 0 \ \text{for all $j$};\]
	that is, $\delta_{i,1},\ldots,\delta_{i, m_i} \in \Diff_{R/\kk}(R, \Rred)$ is a set of Noetherian operators describing the ideal $Q_i$.
	Finally, the union of all the $\delta_{i,j}$'s gives a set of Noetherian operators describing the ideal $\aaa$.
\end{proof}

The following technical proposition is our fundamental tool for obtaining the differential operators used in \autoref{thm:main}.

\begin{proposition}
	\label{prop_find_diff_op}
	Assume \autoref{setup}.
	Let $\aaa \subsetneqq \bb \subset R$ be two ideal ideals such that $\Ass_R(R/\aaa) \subseteq \Ass(R)$.
	Let $\psi : \bb/\aaa \hookrightarrow R/\pp$ be an injective $R$-linear map for some $\pp \in \Ass(R)$.
	Then, there is a differential operator $\delta_{\bb/\aaa} \in \Diff_{R/\kk}(R, \Rred)$ satisfying the following: 
	\begin{samepage}
	\begin{enumerate}[\rm (i)]
		\item $\delta_{\bb/\aaa}(\aaa) = 0$.
		\item Denoting by $\overline{\delta}_{\bb/\aaa} \in \Diff_{R/\kk}(R, R/\pp)$ the induced differential operator, we obtain a nonzero element ${d_{\bb/\aaa}} \in \Quot(R/\pp)$  such that 
		$$
		{\overline{\delta}}_{\bb/\aaa}(f) \;=\; \psi\left(\overline{f}\right) \cdot {d_{\bb/\aaa}}  \quad \text{ for all } \quad  f \in \bb.
		$$
	\end{enumerate}
	\end{samepage}
\end{proposition}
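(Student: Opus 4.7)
The plan is to produce $\delta_{\bb/\aaa}$ by first constructing a local operator at $\pp$ via \autoref{thm_noeth_ops}, then globalizing through \autoref{localization_lemma} in the style of the proof of \autoref{cor_noeth_ops}. For the local setup: since $\pp \in \Ass(R)$ is minimal (there are no embedded primes), $R_\pp$ is Artinian local with maximal ideal $\mm = \pp R_\pp$ and residue field $\kappa(\pp)$, and the hypothesis $\Ass_R(R/\aaa) \subseteq \Ass(R)$ forces $\aaa R_\pp$ to be $\mm$-primary. Because $\bb/\aaa$ is a nonzero submodule of the domain $R/\pp$, we have $\Ass_R(\bb/\aaa) = \{\pp\}$, so $(\bb/\aaa)_\pp \neq 0$; being a nonzero $R_\pp$-submodule of the simple module $\kappa(\pp)$, this forces $\psi_\pp \colon \bb R_\pp/\aaa R_\pp \to \kappa(\pp)$ to be an $R_\pp$-linear isomorphism.

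The heart of the argument is the construction of a single local operator $\delta'' \in \Diff_{R_\pp/\kk}(R_\pp, \kappa(\pp))$ satisfying $\delta''(\aaa R_\pp) = 0$ and $\delta''(f) = u \cdot \psi_\pp(\overline{f})$ for every $f \in \bb R_\pp$ and some $u \in \kappa(\pp)^\times$. Applying \autoref{thm_noeth_ops} to the $\mm$-primary ideals $\aaa R_\pp$ and $\bb R_\pp$ gives $\kappa(\pp)$-subspaces $D_\bb \subseteq D_\aaa \subseteq \Diff_{R_\pp/\kk}(R_\pp, \kappa(\pp))$ whose common kernels equal $\aaa R_\pp$ and $\bb R_\pp$ respectively. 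Evaluation yields a pairing
\[
(D_\aaa/D_\bb) \otimes_{\kappa(\pp)} (\bb R_\pp/\aaa R_\pp) \;\longrightarrow\; \kappa(\pp),
\]
which, by the Macaulay-style inverse-system duality underlying the Noetherian operators theorem in the Artinian local setting, is perfect. Since $\bb R_\pp/\aaa R_\pp \cong \kappa(\pp)$ is one-dimensional over $\kappa(\pp)$, so is $D_\aaa/D_\bb$, and any nonzero representative $\delta''$ restricts to a nonzero $\kappa(\pp)$-linear map $\bb R_\pp/\aaa R_\pp \to \kappa(\pp)$; this must equal $u \cdot \psi_\pp$ for some $u \in \kappa(\pp)^\times$ because $\Hom_{R_\pp}(\bb R_\pp/\aaa R_\pp, \kappa(\pp)) \cong \kappa(\pp)$ is itself one-dimensional.

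To globalize, set $W \colonequals R \setminus \bigcup_{\pp' \in \Ass(R)} \pp'$ and $S \colonequals W^{-1}R$. As in the proof of \autoref{cor_noeth_ops}, \autoref{localization_lemma} yields
\[
W^{-1}R \otimes_R \Diff_{R/\kk}(R,\Rred) \;\cong\; \bigoplus_{\pp' \in \Ass(R)} \Diff_{R_{\pp'}/\kk}\bigl(R_{\pp'}, \kappa(\pp')\bigr),
\]
and the tuple $\tilde\delta$ equal to $\delta''$ in the $\pp$-coordinate and zero elsewhere can be written as $(1/w)\otimes \delta_{\bb/\aaa}$ for some $w \in W$ and $\delta_{\bb/\aaa} \in \Diff_{R/\kk}(R,\Rred)$. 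Property (i) follows because $\tilde\delta(f) = 0$ for $f \in \aaa$, so $\delta_{\bb/\aaa}(f) = w \cdot \tilde\delta(f) = 0$ in $W^{-1}\Rred$, and $\Rred$ injects into $W^{-1}\Rred$ since $W$ consists of nonzerodivisors on $\Rred$. For (ii), the $\pp$-component of $\overline{\delta}_{\bb/\aaa}(f)$ for $f \in \bb$ equals $\overline{w} \cdot u \cdot \psi_\pp(\overline{f})$ in $\kappa(\pp) = \Quot(R/\pp)$, so $d_{\bb/\aaa} \colonequals \overline{w} \cdot u \neq 0$ satisfies the required identity. The chief difficulty is thus the local construction, and specifically the Macaulay/Matlis-type duality identifying $D_\aaa/D_\bb$ as one-dimensional over $\kappa(\pp)$ with its restriction to $\bb R_\pp/\aaa R_\pp$ landing in the $R_\pp$-linear functionals; this is an Artinian-local phenomenon that must be extracted carefully from (or proved alongside) \autoref{thm_noeth_ops}.
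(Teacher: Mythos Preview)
Your approach is genuinely different from the paper's, and the point you flag as ``the chief difficulty'' is in fact a real gap, not just a bookkeeping matter.

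The problem is that the evaluation pairing $(D_\aaa/D_\bb)\times(\bb R_\pp/\aaa R_\pp)\to\kappa(\pp)$ is only $\kk$-bilinear a priori, not $\kappa(\pp)$-bilinear in the second variable, so you cannot deduce $\dim_{\kappa(\pp)}(D_\aaa/D_\bb)=1$ from nondegeneracy. More concretely, an arbitrary $\delta''\in D_\aaa\setminus D_\bb$ need \emph{not} restrict to an $R_\pp$-linear map on $\bb R_\pp$. Take $R=\kk[t,x]/(x^2)$, $\pp=(x)$, $\aaa=0$, $\bb=(x)$; then $\delta:a(t)+b(t)x\mapsto b'(t)$ is a second-order operator in $\Diff_{R/\kk}(R,\Rred)$ which kills $\aaa$ and not $\bb$, yet $\delta(t\cdot x)=1\neq 0=t\cdot\delta(x)$, so $\delta|_{\bb}$ is not $R$-linear. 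Thus ``any nonzero representative $\delta''$'' does not work, and the Macaulay/Matlis heuristic does not select a good one for you.

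The paper sidesteps duality entirely with a minimality-of-order trick, and works globally throughout. Among all $\delta\in\Diff_{R/\kk}(R,\Rred)$ with $\delta(\aaa)=0$ and $\delta(\bb)\neq 0$ (such exist by \autoref{cor_noeth_ops}), choose one of \emph{minimal order}. For any $r\in R$ the bracket $[\delta,r]$ has strictly lower order and still kills the ideal $\aaa$, so minimality forces $[\delta,r](\bb)=0$; this is exactly the statement that $\delta|_{\bb}$ is $R$-linear. From there, $\pp\cdot\bb\subset\aaa$ gives a nonzero $R$-linear map $\bb/\aaa\to(0:_{\Rred}\pp)$, the projection $\Rred\to R/\pp$ is injective on $(0:_{\Rred}\pp)$, and the identification $\Hom_{R}(\bb/\aaa,R/\pp)\cong\bigl(R/\pp:_{\Quot(R/\pp)}\psi(\bb/\aaa)\bigr)$ produces $d_{\bb/\aaa}$. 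No localization is needed, so your globalization step becomes superfluous once the correct operator is in hand.
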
	
\begin{proof}
	From \autoref{cor_noeth_ops}, we can choose Noetherian operators $\delta_1,\ldots,\delta_m \in \Diff_{R/\kk}(R, \Rred)$ that describe the  ideal $\aaa$. In particular, for any $f \in R$, we have that $f \in \aaa$ if and only if $\delta_i(f) = 0$ for all $1 \le i \le m$.
	
	Since $\bb \supsetneqq \aaa$, there exists a differential operator $\delta_{\bb/\aaa} \in \Diff_{R/\kk}(R, \Rred)$ of minimal order such that $\delta_{\bb/\aaa}(\bb) \neq 0$ and $\delta_{\bb/\aaa}(\aaa) = 0$.
	For all $f \in R$ and $g \in \bb$, we have the equality 
	$$
	\delta_{\bb/\aaa}(fg) \;=\; \pi(f) \cdot \delta_{\bb/\aaa}(g) + \left[\delta_{\bb/\aaa}, f\right](g).
	$$
	As $[\delta_{\bb/\aaa}, f]$ has strictly lower order than $\delta_{\bb/\aaa}$ and $[\delta_{\bb/\aaa}, f](\aaa)=0$, the minimality of $\delta_{\bb/\aaa}$ forces the vanishing $[\delta_{\bb/\aaa}, f](g)=0$.
	Thus we get $\delta_{\bb/\aaa}(fg) = \pi(f) \cdot \delta_{\bb/\aaa}(g)$ for all $f \in R$ and $g \in \bb$.
	This means that the restriction
	$$
	\delta_{\bb/\aaa} : \bb \rightarrow \Rred \quad \text{ is a nonzero $R$-linear map with } \quad \delta_{\bb/\aaa}(\aaa) = 0.
	$$
	Since $\pp \cdot \bb \subset \aaa$, we can make the identification $\delta_{\bb/\aaa} : \bb /\aaa \rightarrow (0:_{\Rred}\pp)$.
	Let $\tau : \Rred \rightarrow R/\pp$ be the quotient map.
	From the fact that $(0:_{\Rred} \pp) \cap \pp\Rred = 0$, we obtain a nonzero $R$-linear map
	$$
	\overline{\delta}_{\bb/\aaa} = \tau \circ \delta_{\bb/\aaa} : \bb/\aaa \rightarrow R/\pp.
	$$
	Let $\ccc \colonequals \IM(\psi) \subset R/\pp$ and notice that we have the isomorphisms 
	$$
	\Hom_{R/\aaa}(\bb/\aaa, R/\pp) \;\cong\; \Hom_{R/\pp}(\ccc, R/\pp) \;\cong\; \left(R/\pp :_{\Quot(R/\pp)} \ccc\right).
	$$
	Therefore, there exists a nonzero element ${d_{\bb/\aaa}} \in \Quot(R/\pp)$ such that $\overline{\delta}_{\bb/\aaa}(f) = \psi\left(\overline{f}\right) \cdot {d_{\bb/\aaa}}$  for all $f \in \bb$. 
	This completes the proof of the proposition.
\end{proof}

Another important part of our proof is the lemma below, where we utilize Huneke's uniform Artin-Rees lemma from \cite{HUNEKE_UNIFORM}.	
	
\begin{lemma}
	\label{lem_uniform_AR}
	Assume the notation and assumptions of  \autoref{prop_find_diff_op}.
	 There is an integer $c_{\bb/\aaa}$ only depending on $\aaa$ and $\bb$ such that, for any ideal $J$ in $R$ with image $I$ in $\Rred$ and any $n \ge 0$,  if $f \in \bb$ and $\delta_{\bb/\aaa}(f) \in I^{n+c_{\bb/\aaa}}$, then $f \in \bb\cap J^n + \aaa$.
\end{lemma}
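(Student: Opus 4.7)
The plan is to translate the hypothesis $\delta_{\bb/\aaa}(f) \in I^{n+c}$ into a statement about $\psi(\overline{f})$ inside the domain $R/\pp$, then apply Huneke's uniform Artin--Rees theorem \cite{HUNEKE_UNIFORM} twice in order to trap $\psi(\overline{f})$ inside $I'^n \cdot \psi(\bb/\aaa)$, where $I' \colonequals \tau(I) \subseteq R/\pp$ is the image of $J$ in $R/\pp$. The injectivity of $\psi$ will then lift this conclusion back to $f \in J^n \bb + \aaa \subseteq (\bb \cap J^n) + \aaa$, as desired.

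In detail, I would first push $\delta_{\bb/\aaa}(f) \in I^{n+c}\Rred$ through $\tau : \Rred \to R/\pp$ to obtain, via \autoref{prop_find_diff_op}, $\psi(\overline{f}) \cdot d_{\bb/\aaa} \in I'^{n+c}(R/\pp)$. Writing $d_{\bb/\aaa} = d_1/d_2$ with $d_1, d_2$ nonzero elements of the domain $R/\pp$ and clearing the nonzerodivisor $d_2$, this sharpens to $\psi(\overline{f}) \cdot d_1 \in (d_1) \cap I'^{n+c}(R/\pp)$. Since $R/\pp$ is a domain essentially of finite type over $\kk$, Huneke's uniform Artin--Rees supplies a constant $c_1$, depending only on $d_1$ and $R/\pp$, with $(d_1) \cap I'^m(R/\pp) \subseteq I'^{m-c_1}(d_1)$ for every ideal $I'$ of $R/\pp$ and every $m \ge c_1$. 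Cancelling the nonzero $d_1$ in the domain $R/\pp$ yields $\psi(\overline{f}) \in I'^{n+c-c_1}(R/\pp) \cap \psi(\bb/\aaa)$. A second application of uniform Artin--Rees, now to the inclusion $\psi(\bb/\aaa) \hookrightarrow R/\pp$, produces a constant $c_2$ with $\psi(\overline{f}) \in I'^{n+c-c_1-c_2} \cdot \psi(\bb/\aaa)$.

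Setting $c_{\bb/\aaa} \colonequals c_1 + c_2$, this reads $\psi(\overline{f}) \in I'^n \cdot \psi(\bb/\aaa)$. The $R$-linearity of $\psi$ unfolds $I'^n \cdot \psi(\bb/\aaa) = \psi\bigl(J^n \cdot (\bb/\aaa)\bigr)$, and the injectivity of $\psi$ gives $\overline{f} \in J^n(\bb/\aaa)$, so $f \in J^n \bb + \aaa \subseteq (\bb \cap J^n) + \aaa$. The main technical point is the uniformity: the constants $c_1$ and $c_2$ must be independent of $J$ and $n$, and this is exactly what Huneke's theorem delivers for finitely generated submodules of $R/\pp$, since $R/\pp$ is a reduced ring essentially of finite type over a field. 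The side conditions $n+c \ge c_1$ and $n+c-c_1 \ge c_2$ needed for the two uniform Artin--Rees bounds to apply are automatic from the choice $c_{\bb/\aaa} = c_1 + c_2$ and $n \ge 0$.
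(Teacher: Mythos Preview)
Your proposal is correct and follows essentially the same route as the paper: push the hypothesis into the domain $R/\pp$ via $\overline{\delta}_{\bb/\aaa}(f)=\psi(\overline{f})\cdot d_{\bb/\aaa}$, clear the denominator of $d_{\bb/\aaa}$, apply Huneke's uniform Artin--Rees once to cancel the numerator and once to pass from $K^{n}\cap\IM(\psi)$ to $K^{n}\cdot\IM(\psi)$, then pull back through the isomorphism $\psi$. Your write-up is in fact slightly more careful than the paper's in spelling out why $J^n\bb+\aaa\subseteq(\bb\cap J^n)+\aaa$ and why the index shifts are harmless.
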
	
\begin{proof}
	Let $K \colonequals (J+\pp)/\pp \subset R/\pp$.
	Let $f \in \bb$ with $\delta_{\bb/\aaa}(f) \in  I^n$.
	This implies $\overline{\delta}_{\bb/\aaa}(f) =\psi(\overline{f}) {d_{\bb/\aaa}} \in K^{n}$.
	Write ${d_{\bb/\aaa}} = s/t$ with $s,t \in R/\pp$ and notice that $\psi(\overline{f})s \in K^{n}$.	
	From the uniform Artin-Rees lemma  \cite[Theorem 4.12]{HUNEKE_UNIFORM}, we obtain a constant $c_1$ such that 
	$$
	\psi(\overline{f}) s \;\in\; K^{n} \cap (s) \;\subset\; s K^{n-c_1}.
	$$
	Since $s$ is a nonzero element in the domain $R/\pp$, it follows that $\psi(\overline{f}) \in K^{n-c_1}$.
	Consider the ideal $\ccc \colonequals \IM(\psi) \subset R/\pp$.
	Again, by the uniform Artin-Rees lemma, there is a constant $c_2$ such that 
	$$
	\psi(\overline{f}) \;\in\; K^{n-c_1} \cap \ccc \;\subset\; \ccc K^{n-c_1-c_2}.
	$$
	Via $\psi : \bb/\aaa \xrightarrow{\;\cong\;} \ccc \subset R/\pp$, we obtain the induced isomorphism $J^{n-c_1-c_2} \cdot \bb/\aaa \xrightarrow{\;\cong\;} K^{n-c_1-c_2}\cdot \ccc$.
	It then follows $f \in \bb \cap J^{n-c_1-c_2} + \aaa$.
	So the claim of the lemma holds with $c_{\bb/\aaa}\colonequals c_1+c_2$.
\end{proof}


After the previous developments, we are ready for the proof of our main result.

\begin{proof}[Proof of \autoref{thm:main}]
	Choose a filtration $0 = \aaa_0 \,\subsetneqq\, \aaa_1 \,\subsetneqq\, \cdots \,\subsetneqq\, \aaa_k = R$ from \autoref{lem_filtration}.
	Let 
	$$
	\delta_i \;\colonequals \; \delta_{\aaa_i/\aaa_{i-1}} \;\in\; \Diff_{R/\kk}(R, \Rred)
	$$ 
	be the differential operator obtained by applying \autoref{prop_find_diff_op} to the ideals $\aaa_{i-1} \subsetneqq \aaa_{i}$.
	Let ${c_i \colonequals c_{\aaa_i/\aaa_{i-1}}}$ be the constant given by \autoref{lem_uniform_AR}.
	Notice that we may assume $\delta_k = \tau : R \rightarrow R/\aaa_{k-1}$ is the natural projection.
	 Let $e_1$ be the maximum of the orders of the operators $\delta_i$ and $e_2$ be the maximum of the constants $c_i$. 
	 Set $e = e_1+e_2$ and $c = ke$.
	
	Let $f \in R$ be an element such that $\delta_i(f) \in I^{n+c}$ for all $1 \le i \le k$.
	Since $\tau(f) = \delta_k(f) \in I^{n+c}$, we can write 
	$$
	f \;=\; f_{k-1} + f_k
	$$
	where $f_k \in J^{n+c} = \aaa_k \cap J^{n+ke}$ and $f_{k-1} \in \aaa_{k-1}$.
	Suppose by induction that we have obtained the equation 
	$$
	f \;=\; f_i + f_{i+1} + \cdots + f_k
	$$
	with $f_i \in \aaa_i$ and $f_j \in \aaa_j \cap J^{n+je}$ for all $j > i$.
	By applying the operator $\delta_i$, we get 
	$$
	\delta_i(f_i) \;=\; \delta_i(f) - \delta_i(f_{i+1}) - \cdots - \delta_i(f_k), 
	$$
	and since by \autoref{order-lemma}, $\delta_i(f) \in I^{n+ke}$ and $\delta_i(f_j) \in I^{n+je-e_1}$ for all $j > i$, it follows that $\delta_i(f_i)\in I^{n+(i+1)e-e_1}$.
	Therefore \autoref{lem_uniform_AR} yields $f_i = f_{i-1} + f_i'$ with $f_{i-1} \in \aaa_{i-1}$ and $f_i' \in \aaa_i \cap J^{n+ie}$.
	Consequently, by proceeding inductively we can write
	$$
	f \;=\; f_1 + f_2 + \cdots + f_k \quad \text{ with } \quad f_i \in \aaa_i \cap J^{n+ie}.
	$$
	In particular, we get the desired inclusion $f \in J^n$.
	If $\delta_1,\ldots,\delta_k$ do not give a set of Noetherian operators describing the zero ideal $(0)$, we can further adjoin differential operators from \autoref{cor_noeth_ops} until we get a set of Noetherian operators.
\end{proof}

\begin{remark}
	\label{rem_other_containment}
In the setting of \autoref{thm:main}, let $\delta_1,\dots,\delta_m \in \Diff_{R/\kk}(R,\Rred)$ be any set of Noetherian operators for $R$. 
If $e$ is the maximum of the orders of the operators $\delta_i$, then by \autoref{order-lemma} we have
\[ 
J^{n+e} \;\subseteq\; I^n :_{\Rred} \{\delta_1,\dots,\delta_m\}.
\] 
\end{remark}

Finally, we have the proofs of our main corollaries. 

\begin{proof}[Proof of \autoref{cor:Briancon_Skoda}] Fix a constant $c_1$ satisfying the conclusion of \autoref{thm:main}.
	By the uniform Brian\c{c}on-Skoda theorem \cite[Theorem 4.13]{HUNEKE_UNIFORM} applied to $\Rred$, there is a constant $c_2$ such that $\overline{I^{n+c_2}} \subseteq I^n$ independent of $I$. Take ${c=c_1+c_2}$. Then we have
\[	\overline{I^{n+c}} :_\Rred \lbrace\delta_1,\ldots,\delta_m\rbrace \ = \ \overline{I^{n+c_1+c_2}} :_\Rred \lbrace\delta_1,\ldots,\delta_m\rbrace \ \subseteq \ {I^{n+c_1}} :_\Rred \lbrace\delta_1,\ldots,\delta_m\rbrace \ \subseteq\ J^n.\qedhere\]
	\end{proof}

\begin{proof}[Proof of \autoref{cor:symb_powers}] 
Again, fix a constant $c_1$ satisfying the conclusion of \autoref{thm:main}.
By  \cite[Theorem~1.1]{HH_SYMB_POWERS} applied to $\Rred$, we get the containment $I^{(nd)} \subset I^n$ for any ideal $I \subset R$.
Take ${c=c_1d}$. 
Then we have
		\[	I^{(nd+c)} :_\Rred \lbrace\delta_1,\ldots,\delta_m\rbrace \ = \ I^{((n+c_1)d)} :_\Rred \lbrace\delta_1,\ldots,\delta_m\rbrace \ \subseteq \ {I^{n+c_1}} :_\Rred \lbrace\delta_1,\ldots,\delta_m\rbrace \ \subseteq\ J^n.\qedhere\]
\end{proof}

\section*{Acknowledgements} We thank the anonymous referee for their useful comments on this paper. This material is based upon work supported by the National Science Foundation under
Grant No. DMS-1928930 and by the Alfred P. Sloan Foundation under grant G-2021-16778,
while the authors were in residence at the Simons Laufer Mathematical Sciences
Institute (formerly MSRI) in Berkeley, California, during the Spring 2024 semester. 
The first author was partially supported by NSF grant DMS-2502321.
The second author was partially supported by NSF CAREER Award DMS-2044833. 
The second author also thanks SECIHTI/CONAHCYT, Mexico, for its support with grant CF-2023-G-33.

\begin{bibdiv}
\begin{biblist}

\bib{Quantifying}{article}{
	author   = {Brenner, Holger}, 
	author = {Jeffries, Jack}, 
	author = {N\'{u}\~{n}ez-Betancourt, Luis},
	title    = {Quantifying singularities with differential operators},
	journal  = {Adv. Math.},
	year     = {2019},
	volume   = {358},
	pages    = {106843, 89},
}

\bib{BRIANCON_SKODA}{article}{
title={Sur la cl\^{o}ture int\'{e}grale d'un id\'{e}al de germes de fonctions
              holomorphes en un point de {${\bf C}^{n}$}}
              author={Brian\c{c}on, Jo\"el},
	author={Skoda, Henri},
	  journal = {C.~R.~Acad. Sci. Paris S\'{e}r. A},
     volume = {278},
      year = {1974},
     pages = {949--951},}

\bib{BRUMFIEL_DIFF_PRIM}{article}{
	Author = {Brumfiel, G.},
	Fjournal = {Journal of Algebra},
	Journal = {J. Algebra},
	Number = {2},
	Pages = {375--398},
	Title = {Differential operators and primary ideals},
	Volume = {51},
	Year = {1978}}
	
\bib{M2_NOETH_OPS}{article}{
	title={Noetherian operators in Macaulay2},
	author={Chen, Justin}, 
	author={Cid-Ruiz, Yairon}, 
	author={H{\"a}rk{\"o}nen, Marc},
	author={Krone, Robert},
	author={Leykin, Anton},
	journal={J. Softw. Algebra Geom.},
	volume={12},
	number={1},
	pages={33--41},
	year={2023},
	publisher={Mathematical Sciences Publishers}
}

\bib{CHEN_LEYKIN}{article}{
	title={Noetherian operators and primary decomposition},
	author={Chen, Justin},
	author={H{\"a}rk{\"o}nen, Marc},
	author={Krone, Robert},
	author={Leykin, Anton},
	journal={J. Symbolic Comput.},
	volume={110},
	pages={1--23},
	year={2022},
	publisher={Elsevier}
}

\bib{NOETH_OPS}{article}{
      author={Cid-Ruiz, Yairon},
       title={Noetherian operators, primary submodules and symbolic powers},
        date={2020},
     journal={Collect. Math.},
       pages={1\ndash 28},
}

\bib{PRIM_DIFF_EQ}{article}{
	author   = {Cid-Ruiz, Yairon}, 
	author = {Homs, Roser},
	author = {Sturmfels, Bernd},
	journal  = {Found. Comput. Math.},
	title    = {Primary ideals and their differential equations},
	year     = {2021},
	number   = {5},
	pages    = {1363--1399},
	volume   = {21},
	fjournal = {Foundations of Computational Mathematics. The Journal of the Society for the Foundations of Computational Mathematics},
}

\bib{PRIM_DIFF_DEC}{article}{
	author  = {Cid-Ruiz, Yairon}
	author ={Sturmfels, Bernd},
	title   = {Primary decomposition with differential operators},
	journal = {Int. Math. Res. Not. IMRN},
	year    = {2022},
	month   = {07},
	issn    = {1073-7928},
	note    = {rnac178},
}

\bib{EHRENPREIS}{book}{
	title     = {Fourier analysis in several complex variables},
	publisher = {Wiley-Interscience Publishers (A Division of John Wiley \& Sons), New York-London-Sydney},
	year      = {1970},
	author    = {Ehrenpreis, Leon},
	series    = {Pure and Applied Mathematics, Vol. XVII},
	pages     = {xiii+506},
}

\bib{EIN_LAZ_SMITH}{article}{
	author = {Ein, Lawrence},
	author = {Lazarsfeld, Robert},
	author = {Smith, Karen E.},
	title = {Uniform bounds and symbolic powers on smooth varieties},
	journal = {Invent. Math.},
	fjournal = {Inventiones Mathematicae},
	volume = {144},
	year = {2001},
	number = {2},
	pages = {241--252},
}

\bib{LINEAR_PDE_STURMFELS}{article}{
	title={Linear PDE with constant coefficients},
	author={El Manssour, Rida Ait},
	author={H{\"a}rk{\"o}nen, Marc}, 
	author={Sturmfels, Bernd},
	journal={Glasg. Math. J.},
	volume={65},
	number={S1},
	pages={S2--S27},
	year={2023},
	publisher={Cambridge University Press}
}

\bib{FLENNER_O_CARROLL_VOGEL}{book}{
      author={Flenner, Hubert},
      author={O'Carroll, Liam},
      author={Vogel, Wolfang},
       title={Joins and intersections},
      series={Springer Monographs in Mathematics},
   publisher={Springer-Verlag, Berlin},
        date={1999},
}

\bib{GROBNER_LIEGE}{article}{
	author    = {Gr\"{o}bner, W.},
	title     = {La th\'{e}orie des id\'{e}aux et la g\'{e}om\'{e}trie alg\'{e}brique},
	booktitle = {Deuxi\`eme {C}olloque de {G}\'{e}om\'{e}trie {A}lg\'{e}brique, {L}i\`ege, 1952},
	publisher = {Georges Thone, Li\`ege; Masson \& Cie, Paris},
	year      = {1952},
	pages     = {129--144},
}

\bib{GROBNER_MATH_ANN}{article}{
	author   = {Gr\"{o}bner, Wolfgang},
	title    = {\"{U}ber eine neue idealtheoretische {G}rundlegung der algebraischen {G}eometrie},
	journal  = {Math. Ann.},
	year     = {1938},
	volume   = {115},
	number   = {1},
	pages    = {333--358},
	fjournal = {Mathematische Annalen},
}

\bib{EGAIV_IV}{book}{
	author = {Grothendieck, Alexander},
	journal = {Publications Math\'ematiques de l'IH\'ES},
	pages = {5-361},
	publisher = {Institut des Hautes \'Etudes Scientifiques},
	title = {{\'E}l\'ements de g\'eom\'etrie alg\'ebrique: {IV}. {\'E}tude locale des sch\'emas et des morphismes de sch\'emas, {Q}uatri\`eme partie},
	volume = {32},
	year = {1967}
}

\bib{HH_SYMB_POWERS}{article}{
	author={Hochster, Melvin},
	author={Huneke, Craig},
	title={Comparison of symbolic and ordinary powers of ideals},
	journal={Invent. Math.},
	fjournal= {Inventiones Mathematicae},
	volume={147},
	year={2002},
	number={2},
	pages={349--369},
}

\bib{HH_TIGHT_CLOSURE}{article}{
	title={Tight closure, invariant theory, and the Brian{\c{c}}on-Skoda theorem},
	author={Hochster, Melvin},
	author={Huneke, Craig},
	journal={J. Amer. Math. Soc.},
	pages={31--116},
	year={1990},
	publisher={JSTOR}
}

\bib{HUNEKE_UNIFORM}{article}{
      author={Huneke, Craig},
       title={Uniform bounds in {N}oetherian rings},
        date={1992},
        ISSN={0020-9910,1432-1297},
     journal={Invent. Math.},
      volume={107},
      number={1},
       pages={203\ndash 223},
}

\bib{LIPMAN_SATHAYE}{article}{
	title={Jacobian ideals and a theorem of Brian{\c{c}}on-Skoda},
	author={Lipman, Joseph}, 
	author={Sathaye, Avinash},
	journal={Michigan Math. J},
	volume={28},
	number={2},
	pages={199--222},
	year={1981}
}

\bib{MS_SYMB}{article}{
	author = {Ma, Linquan},
	author = {Schwede, Karl},
	title = {Perfectoid multiplier/test ideals in regular rings and bounds on symbolic powers},
	journal = {Invent. Math.},
	fjournal = {Inventiones Mathematicae},
	volume = {214},
	year = {2018},
	number = {2},
	pages = {913--955},
}

\bib{MACAULAY}{book}{
	 AUTHOR = {Macaulay, F. S.},
     TITLE = {The algebraic theory of modular systems},
    SERIES = {Cambridge Mathematical Library},
 PUBLISHER = {Cambridge University Press, Cambridge},
      YEAR = {1994},
     PAGES = {xxxii+112},
}

\bib{OBERST_NOETH_OPS}{article}{
	Author = {Oberst, Ulrich},
	Fjournal = {Journal of Algebra},
	Journal = {J. Algebra},
	Number = {2},
	Pages = {595--620},
	Title = {The construction of {N}oetherian operators},
	Volume = {222},
	Year = {1999}}

\bib{PALAMODOV}{book}{
	title     = {Linear differential operators with constant coefficients},
	publisher = {Springer-Verlag, New York-Berlin},
	year      = {1970},
	author    = {Palamodov, V. P.},
	series    = {Translated from the Russian by A. A. Brown. Die Grundlehren der mathematischen Wissenschaften, Band 168},
	pages     = {viii+444},
}

\bib{huneke2006integral}{book}{
	author={Swanson, Irena},
	author={Huneke, Craig},	
	title={Integral closure of ideals, rings, and modules},
	publisher={Cambridge University Press},
	date={2006},
	volume={13},
}

\bib{Sznajdman}{article}{
	author={Sznajdman, Jacob}
	title={A Brian\c{c}on-Skoda-type result for a non-reduced analytic space},
	 journal={J. Reine Angew. Math.},
	 volume={742},
	  date={2018}, 
	  pages={1–16},
	  }

\end{biblist}
\end{bibdiv}

\end{document}